\newtheorem{theorem}{Theorem}[section]
\newtheorem{lemma}[theorem]{Lemma}
\theoremstyle{definition}
\theoremstyle{remark}
\numberwithin{equation}{section}
\begin{document}
\title[Wright Funtions]{Partial Sums of Normalized Wright Functions}
\author[M. U. Din, M. Raza, N. Ya\u{g}mur ]{Muhey u Din$^{1},$ Mohsan Raza$%
^{2}$, Nihat Ya\u{g}mur$^{3\ast }$ }
\address{$^{1}$Department of Mathematics, Government College University
Faisalabad, Pakistan}
\email{muheyudin@gmail.com.}
\address{$^{2}$ Department of Mathematics, Government College University
Faisalabad, Pakistan.}
\email{mohsan976@yahoo.com}
\address{$^{3}$Department of Mathematics, Erzincan University, Erzincan
24000, Turkey}
\email{nhtyagmur@gmail.com}
\keywords{Partial sums, Analytic functions, Normalized Wright functions}
\date{\\
\indent$^{\ast }$ Corresponding author\\
2010\textit{\ Mathematics Subject Classification. }30C45, 30C50, 33C10.}

\begin{abstract}
In this paper we find the partial sums of two kinds normalized Wright
functions and the partial sums of Alexander transform of these normalized
Wright functions.
\end{abstract}

\maketitle

\setcounter{page}{1}



\section{Introduction and preliminaries}

Let $\mathcal{A}$ be the class of functions $f$ of the form%
\begin{equation*}
f(z)=z+\dsum\limits_{m=2}^{\infty }a_{m}z^{m}
\end{equation*}%
analytic in the open unit disc $\mathcal{U}=\left\{ z:\left\vert
z\right\vert <1\right\} $. Consider the Alexander transform given as:%
\begin{equation*}
\mathbb{A}\left[ f\right] \left( z\right) =\dint\limits_{0}^{z}\frac{f(t)}{t}%
dt=z+\dsum\limits_{m=2}^{\infty }\frac{a_{m}}{m}z^{m}.
\end{equation*}%
The surprize use of Hypergeometric function in the solution of the
Bieberbach conjecture has attracted many researchers to study the special
functions. Many authors who study on geometric functions theory are
intersted in some geometric properties such as univalency, starlikeness,
convexity and close-to-convexity of special functions. Recently, several
researchers have studied the geometric properties of hypergeometric
functions \cite{mil, rus}, Bessel functions \cite{BJ1, BJ2, barciz b, BP,
BM, b1, s, sk, k}, Struve functions \cite{OY,YO}, Lommel functions \cite{MC}%
. Motivated by the above works Prajpat \cite{Pr} studied some geometric
properties of Wright function%
\begin{equation*}
W_{\lambda ,\mu }(z)=\dsum\limits_{m=0}^{\infty }\frac{z^{m}}{m!\Gamma
\left( \lambda m+\mu \right) },\text{ }\lambda >-1,\text{ }\mu \in 
\mathbb{C}
.
\end{equation*}%
This series is absolutely convergent in $%
\mathbb{C}
,$ when $\lambda >-1$ and absolutely convergent in open unit disc $\mathcal{U%
}$ for $\lambda =-1.$ Furthermore this function is entire. The Wright
functions were introduced by Wright \cite{wright} and have been used in the
asymtotic theory of partitions, in the theory of integral transforms of the
Hankel type and in Mikusinski operational calculus. Recently, Wright
functions have been found in the solution of partial differential equations
of fractional order. It was found that the corresponding Green functions can
be represented in terms of the Wright functions \cite{pod,samko}$.$ For
positive rational number $\lambda ,$ the Wright functions can be represented
in terms of generalized hypergeometric functions. For some details see \cite[%
section 2.1]{gor}. In particular, the functions $W_{1,v+1}(-z^{2}/4)$ can be
expressed in terms of the Bessel functions $J_{v}$, given as:%
\begin{equation*}
J_{v}\left( z\right) =\left( \frac{z}{2}\right)
^{2}W_{1,v+1}(-z^{2}/4)=\dsum\limits_{m=0}^{\infty }\frac{\left( -1\right)
^{m}\left( z/2\right) ^{2m+v}}{m!\Gamma \left( m+v+1\right) }.
\end{equation*}%
The Wright function generalizes various functions like Array functions,
Whittaker functions, entire auxiliary functions, etc. For the details, we
refer to \cite{gor}$.$ Prajapat discussed some geometric properties of the
following normalizations of Wright functions in \cite{Pr}\ 
\begin{eqnarray}
\mathcal{W}_{\lambda ,\mu }(z) &=&\Gamma \left( \mu \right) zW_{\lambda ,\mu
}(z)  \notag \\
&=&z+\dsum\limits_{m=1}^{\infty }\frac{\Gamma \left( \mu \right) }{m!\Gamma
\left( \lambda m+\mu \right) }z^{m+1}.\text{ }\lambda >-1,\text{ }\mu >0,%
\text{ }z\in \mathcal{U}\text{,}  \label{m3} \\
\mathbb{W}_{\lambda ,\mu }(z) &=&\Gamma \left( \lambda +\mu \right) \left[
W_{\lambda ,\mu }(z)-\frac{1}{\Gamma \left( \mu \right) }\right]  \notag \\
&=&z+\dsum\limits_{m=1}^{\infty }\frac{\Gamma \left( \lambda +\mu \right) }{%
\left( m+1\right) !\Gamma \left( \lambda m+\lambda +\mu \right) }z^{m+1},%
\text{ \ }z\in \mathcal{U}\text{,}  \label{md}
\end{eqnarray}%
where $\lambda >-1,\lambda +\mu >0.$ The Pochhammer (or Appell) symbol,
defined in terms of Euler's gamma functions is given as $(x)_{n}=\Gamma
(x+n)/\Gamma (x)=x(x+1)...(x+n-1)$.

In this note, we study the ratio of a function of the forms $\left( \ref{m3}%
\right) $ and$\ \left( \ref{md}\right) $ to its sequence of partial sums $%
\left( \mathcal{W}_{\lambda ,\mu }\right) _{n}(z)=z+\dsum\limits_{m=1}^{n}%
\frac{\Gamma \left( \mu \right) }{m!\Gamma \left( \lambda m+\mu \right) }%
z^{m+1}$ when the coefficients of $\mathcal{W}_{\lambda ,\mu }$ satisfy
certain conditions. We determine the lower bounds of \ $\func{Re}\left\{ 
\frac{\mathcal{W}_{\lambda ,\mu }(z)}{\left( \mathcal{W}_{\lambda ,\mu
}\right) _{n}(z)}\right\} ,$ $\func{Re}\left\{ \frac{\left( \mathcal{W}%
_{\lambda ,\mu }\right) _{n}(z)}{\mathcal{W}_{\lambda ,\mu }(z)}\right\} ,$ $%
\func{Re}\left\{ \frac{\mathcal{W}_{\lambda ,\mu }^{\prime }(z)}{\left( 
\mathcal{W}_{\lambda ,\mu }\right) _{n}^{\prime }(z)}\right\} ,$ $\func{Re}%
\left\{ \frac{\left( \mathcal{W}_{\lambda ,\mu }\right) _{n}^{\prime }(z)}{%
\mathcal{W}_{\lambda ,\mu }^{\prime }(z)}\right\} ,$

$\func{Re}\left\{ \frac{\mathbb{A}\left[ \mathcal{W}_{\lambda ,\mu }\right]
(z)}{\left( \mathbb{A}\left[ \mathcal{W}_{\lambda ,\mu }\right] \right)
_{n}(z)}\right\} ,$ $\func{Re}\left\{ \frac{\left( \mathbb{A}\left[ \mathcal{%
W}_{\lambda ,\mu }\right] \right) _{n}(z)}{\mathbb{A}\left[ \mathcal{W}%
_{\lambda ,\mu }\right] (z)}\right\} $, where $\mathbb{A}\left[ \mathcal{W}%
_{\lambda ,\mu }\right] $ is the Alexander transform of $\mathcal{W}%
_{\lambda ,\mu }.$ Some similar results are obtained for the function $%
\mathbb{W}_{\lambda ,\mu }(z).$ For some works on partial sums, we refer 
\cite{Brik, lin, orhG, Jmi, owa, small, silver, silva}.

\begin{lemma}
\label{1} Let $\lambda ,$ $\mu \in 
\mathbb{R}
$ and $\lambda >-1,$ $\mu >0.$ Then the function $\mathcal{W}_{\lambda ,\mu
}:\mathcal{U\rightarrow 
\mathbb{C}
\ }$ defined by $\left( \ref{m3}\right) $ satisfies the following
inequalities:

(i) If $\mu >\frac{1}{2},$ then 
\begin{equation*}
\left\vert \mathcal{W}_{\lambda ,\mu }(z)\right\vert \leq \frac{2\mu +1}{%
2\mu -1},\ \ z\in \mathcal{U}.
\end{equation*}%
(ii) If $\mu >1,$ then 
\begin{equation*}
\left\vert \mathcal{W}_{\lambda ,\mu }^{\prime }(z)\right\vert \leq \frac{%
\mu +1}{\mu -1},\ \ z\in \mathcal{U}.
\end{equation*}%
(iii) If $\mu >\frac{1}{2},$ then%
\begin{equation*}
\left\vert \mathbb{A}\left[ \mathcal{W}_{\lambda ,\mu }\right]
(z)\right\vert \leq \frac{2\mu }{2\mu -1},\ \ z\in \mathcal{U}.
\end{equation*}
\end{lemma}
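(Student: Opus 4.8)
The plan is to reduce all three estimates to the summation of numerical series. Since every $z\in\mathcal U$ satisfies $|z|<1$, applying the triangle inequality after replacing each power of $z$ by $1$ in the expansions of $\mathcal W_{\lambda,\mu}$, $\mathcal W_{\lambda,\mu}'$ and $\mathbb A[\mathcal W_{\lambda,\mu}]$ yields
\begin{equation*}
|\mathcal W_{\lambda,\mu}(z)|\le 1+\sum_{m=1}^{\infty}c_m,\qquad |\mathcal W_{\lambda,\mu}'(z)|\le 1+\sum_{m=1}^{\infty}(m+1)c_m,\qquad |\mathbb A[\mathcal W_{\lambda,\mu}](z)|\le 1+\sum_{m=1}^{\infty}\frac{c_m}{m+1},
\end{equation*}
where $c_m=\Gamma(\mu)/\bigl(m!\,\Gamma(\lambda m+\mu)\bigr)$. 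Thus it is enough to majorize these three series by $\frac{2}{2\mu-1}$, $\frac{2}{\mu-1}$ and $\frac{1}{2\mu-1}$, respectively.

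The core of the proof is a geometric bound on $c_m$. First I would use the monotonicity of the Gamma function to obtain $\Gamma(\lambda m+\mu)\ge\Gamma(m+\mu)=(\mu)_m\,\Gamma(\mu)$, so that $c_m\le 1/\bigl(m!\,(\mu)_m\bigr)$. Writing $m!\,(\mu)_m=\prod_{k=0}^{m-1}(k+1)(\mu+k)$ and observing that the $k=0$ factor equals $\mu$ while each factor with $k\ge1$ satisfies $(k+1)(\mu+k)\ge 2(\mu+1)>2\mu$, one gets $m!\,(\mu)_m\ge\mu\,(2\mu)^{m-1}$, hence
\begin{equation*}
c_m\le\frac1\mu\Bigl(\frac{1}{2\mu}\Bigr)^{m-1},\qquad m\ge1.
\end{equation*}
Using instead the weaker factor bound $(k+1)(\mu+k)\ge(k+1)\mu$ gives $m!\,(\mu)_m\ge\mu^m m!$, and since $m!\ge(m+1)/2$ this yields $(m+1)c_m\le 2\mu^{-m}$ for all $m\ge1$.

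The three parts now follow by summing geometric series at exactly the stated parameter thresholds. For (i), $\mu>\frac12$ makes $\sum_{m\ge1}\mu^{-1}(2\mu)^{-(m-1)}=\frac1\mu\cdot\frac{2\mu}{2\mu-1}=\frac{2}{2\mu-1}$, so $|\mathcal W_{\lambda,\mu}(z)|<1+\frac{2}{2\mu-1}=\frac{2\mu+1}{2\mu-1}$. For (iii), $c_m/(m+1)\le c_m/2$, so $\sum_{m\ge1}c_m/(m+1)\le\frac{1}{2\mu-1}$ and $|\mathbb A[\mathcal W_{\lambda,\mu}](z)|\le 1+\frac{1}{2\mu-1}=\frac{2\mu}{2\mu-1}$. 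For (ii), $\mu>1$ makes $\sum_{m\ge1}2\mu^{-m}=\frac{2}{\mu-1}$, so $|\mathcal W_{\lambda,\mu}'(z)|\le 1+\frac{2}{\mu-1}=\frac{\mu+1}{\mu-1}$.

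The step I expect to be the main obstacle is the coefficient estimate: one must justify the lower bound $\Gamma(\lambda m+\mu)\ge\Gamma(m+\mu)$ throughout the admissible range of $(\lambda,\mu)$, and the geometric majorants have to be sharp enough that the series collapse to precisely the indicated closed forms. It is in fact the convergence of these geometric series that forces the hypotheses $\mu>\frac12$ in parts (i) and (iii) and $\mu>1$ in part (ii); everything after the coefficient bound is routine bookkeeping with $|z|<1$.
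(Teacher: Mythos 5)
Your proof is correct and follows essentially the same route as the paper: the same monotonicity bound $\Gamma(\lambda m+\mu)\ge\Gamma(m+\mu)$ reducing $c_m$ to $1/\bigl(m!\,(\mu)_m\bigr)$, followed by the same geometric majorants $m!\,(\mu)_m\ge\mu(2\mu)^{m-1}$ and $m!\ge(m+1)/2$ and the same closed-form summations. The only cosmetic difference is in part (iii), where you use $m+1\ge2$ together with the series bound from part (i) instead of the paper's $(m+1)!\ge2^{m}$; both yield the identical estimate $\frac{1}{2\mu-1}$.
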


\begin{proof}
(i) By using the well-known triangle inequalitiy%
\begin{equation*}
\left\vert z_{1}+z_{2}\right\vert \leq \left\vert z_{1}\right\vert
+\left\vert z_{2}\right\vert
\end{equation*}%
with the inequality $\Gamma \left( \mu +m\right) \leq \Gamma \left( \mu
+m\lambda \right) ,$ $m\in 
\mathbb{N}
,$ which is equivalent to $\frac{\Gamma \left( \mu \right) }{\Gamma \left(
\lambda m+\mu \right) }\leq \frac{1}{\mu (\mu +1)...(\mu +m-1)}=\frac{1}{%
\left( \mu \right) _{m}},$ $m\in 
\mathbb{N}
$ and the inequalities%
\begin{equation*}
\left( \mu \right) _{m}\geq \mu ^{m},\ m!\geq 2^{m-1},\ m\in 
\mathbb{N}
,
\end{equation*}%
we obtain%
\begin{eqnarray*}
\left\vert \mathcal{W}_{\lambda ,\mu }(z)\right\vert &=&\left\vert
z+\dsum\limits_{m=1}^{\infty }\frac{\Gamma \left( \mu \right) }{m!\Gamma
\left( \lambda m+\mu \right) }z^{m+1}\right\vert \leq
1+\dsum\limits_{m=1}^{\infty }\frac{\Gamma \left( \mu \right) }{m!\Gamma
\left( \lambda m+\mu \right) } \\
&\leq &1+\dsum\limits_{m=1}^{\infty }\frac{1}{m!\left( \mu \right) _{m}} \\
&\leq &1+\frac{1}{\mu }\dsum\limits_{m=1}^{\infty }\left( \frac{1}{2\mu }%
\right) ^{m-1} \\
&=&\frac{2\mu +1}{2\mu -1},\text{ \ }\mu >1/2,\ \ \ z\in \mathcal{U}\text{.}
\end{eqnarray*}%
(ii) To prove \ (ii), we use the well-known triangle inequality with the
inequality $\frac{\Gamma \left( \mu \right) }{\Gamma \left( \lambda m+\mu
\right) }\leq \frac{1}{\mu (\mu +1)...(\mu +m-1)}=\frac{1}{\left( \mu
\right) _{m}},$ $m\in 
\mathbb{N}
$ and the inequalities%
\begin{equation*}
\left( \mu \right) _{m}\geq \mu ^{m},\ m!\geq \frac{m+1}{2},\ m\in 
\mathbb{N}
,
\end{equation*}%
we have 
\begin{eqnarray*}
\left\vert \mathcal{W}_{\lambda ,\mu }^{\prime }(z)\right\vert &=&\left\vert
1+\dsum\limits_{m=1}^{\infty }\frac{\Gamma \left( \mu \right) (m+1)}{%
m!\Gamma \left( \lambda m+\mu \right) }z^{m}\right\vert \leq
1+\dsum\limits_{m=1}^{\infty }\frac{\Gamma \left( \mu \right) (m+1)}{%
m!\Gamma \left( \lambda m+\mu \right) } \\
&\leq &1+\dsum\limits_{m=1}^{\infty }\frac{m+1}{m!\left( \mu \right) _{m}} \\
&\leq &1+\frac{2}{\mu }\dsum\limits_{m=1}^{\infty }\left( \frac{1}{\mu }%
\right) ^{m-1} \\
&=&\frac{\mu +1}{\mu -1},\text{ \ \ }\mu >1,\ z\in \mathcal{U}\text{.}
\end{eqnarray*}%
(iii) Making the use of triangle inequality with $\frac{\Gamma \left( \mu
\right) }{\Gamma \left( \lambda m+\mu \right) }\leq \frac{1}{\left( \mu
\right) _{m}}$ and the inequalities%
\begin{equation*}
\left( \mu \right) _{m}\geq \mu ^{m},\ \left( m+1\right) !\geq 2^{m},\ m\in 
\mathbb{N}
,
\end{equation*}%
we have%
\begin{eqnarray*}
\left\vert \mathbb{A}\left[ \mathcal{W}_{\lambda ,\mu }\right]
(z)\right\vert &=&\left\vert z+\dsum\limits_{m=1}^{\infty }\frac{\Gamma
\left( \mu \right) }{\left( m+1\right) !\Gamma \left( \lambda m+\mu \right) }%
z^{m+1}\right\vert \\
&\leq &1+\dsum\limits_{m=1}^{\infty }\frac{\Gamma \left( \mu \right) }{%
\left( m+1\right) !\Gamma \left( \lambda m+\mu \right) } \\
&\leq &1+\dsum\limits_{m=1}^{\infty }\frac{1}{\left( m+1\right) !\left( \mu
\right) _{m}} \\
&\leq &1+\frac{1}{2\mu }\dsum\limits_{m=1}^{\infty }\left( \frac{1}{2\mu }%
\right) ^{m-1} \\
&=&\frac{2\mu }{2\mu -1},\text{ \ \ }\mu >1/2,\ \ \ \ z\in \mathcal{U}\text{.%
}
\end{eqnarray*}
\end{proof}

\begin{lemma}
\label{2}Let $\lambda ,$ $\mu \in 
\mathbb{R}
$ and $\lambda >-1,$ $\lambda +\mu >0.$ Then the function $\mathbb{W}%
_{\lambda ,\mu }:\mathcal{U\rightarrow 
\mathbb{C}
\ }$ defined by $\left( \ref{md}\right) $ satisfies the following
inequalities:

(i) If $\lambda +\mu >\frac{1}{2},$ then 
\begin{equation*}
\left\vert \mathbb{W}_{\lambda ,\mu }(z)\right\vert \leq \frac{2\left(
\lambda +\mu \right) }{2\left( \lambda +\mu \right) -1},\text{ \ \ }z\in 
\mathcal{U}.
\end{equation*}%
(ii) If $\lambda +\mu >\frac{1}{2},$ then 
\begin{equation*}
\left\vert \mathbb{W}_{\lambda ,\mu }^{\prime }(z)\right\vert \leq \frac{%
2\left( \lambda +\mu \right) +1}{2\left( \lambda +\mu \right) -1},\text{ \ \ 
}z\in \mathcal{U}.
\end{equation*}
\end{lemma}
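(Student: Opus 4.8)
The plan is to follow the template of the proof of Lemma~\ref{1}, with the parameter $\mu$ there replaced throughout by $\lambda+\mu$; write $\nu=\lambda+\mu$ for brevity. Everything rests on the single coefficient estimate
\begin{equation*}
\frac{\Gamma\left(\lambda+\mu\right)}{\Gamma\left(\lambda m+\lambda+\mu\right)}\leq\frac{1}{\left(\nu\right)_{m}}\leq\frac{1}{\nu^{m}},\qquad m\in\mathbb{N},
\end{equation*}
in which the first inequality is the assertion $\Gamma\left(\nu+m\right)\leq\Gamma\left(\lambda m+\lambda+\mu\right)$ rewritten through $\left(\nu\right)_{m}=\Gamma\left(\nu+m\right)/\Gamma\left(\nu\right)$, and the second is $\left(\nu\right)_{m}=\nu\left(\nu+1\right)\cdots\left(\nu+m-1\right)\geq\nu^{m}$.

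For part (i) I would apply the triangle inequality to the series $\left(\ref{md}\right)$, pass to the bound at $\left\vert z\right\vert<1$ termwise, estimate each coefficient by $1/\left(\left(m+1\right)!\,\nu^{m}\right)$ via the display above, and then use $\left(m+1\right)!\geq 2^{m}$ for $m\in\mathbb{N}$. What remains is the geometric series $\sum_{m\geq1}\left(2\nu\right)^{-m}$, which converges precisely when $\nu>1/2$ and sums to $1/\left(2\nu-1\right)$; adding the leading term $1$ yields $2\nu/\left(2\nu-1\right)=2\left(\lambda+\mu\right)/\left(2\left(\lambda+\mu\right)-1\right)$.

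For part (ii) I would first differentiate $\left(\ref{md}\right)$ term by term, which is legitimate on $\mathcal{U}$ since the series there represents an analytic function; the factor $m+1$ produced by differentiation cancels the $m+1$ inside $\left(m+1\right)!$, leaving $\mathbb{W}_{\lambda,\mu}^{\prime}(z)=1+\sum_{m\geq1}\frac{\Gamma\left(\lambda+\mu\right)}{m!\,\Gamma\left(\lambda m+\lambda+\mu\right)}z^{m}$. Then I would run the same estimate, now bounding each coefficient by $1/\left(m!\,\nu^{m}\right)$ and using $m!\geq 2^{m-1}$; the tail becomes $\nu^{-1}\sum_{m\geq1}\left(2\nu\right)^{-(m-1)}=2/\left(2\nu-1\right)$, again needing only $\nu>1/2$, and adding $1$ gives $\left(2\nu+1\right)/\left(2\nu-1\right)=\left(2\left(\lambda+\mu\right)+1\right)/\left(2\left(\lambda+\mu\right)-1\right)$.

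The one place calling for care is the inequality $\Gamma\left(\nu+m\right)\leq\Gamma\left(\lambda m+\lambda+\mu\right)$ on which the coefficient estimate rests. Since $\lambda m+\lambda+\mu-\left(\nu+m\right)=\left(\lambda-1\right)m$, it is delivered by the monotonicity of $\Gamma$ on the range in question together with $\left(\lambda-1\right)m\geq0$ (the condition $\nu>1/2$ placing the arguments to the right of the minimum of $\Gamma$), exactly as the inequality $\Gamma\left(\mu+m\right)\leq\Gamma\left(\mu+m\lambda\right)$ is used in the proof of Lemma~\ref{1}, so I would dispose of it in the same manner. Everything else is the summation of two geometric series, and I anticipate no further obstacle.
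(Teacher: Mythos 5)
Your proof is correct and follows the same template as the paper's (triangle inequality, the coefficient bound $\Gamma(\lambda+\mu)/\Gamma(\lambda m+\lambda+\mu)\le 1/(\lambda+\mu)_m\le(\lambda+\mu)^{-m}$, then a geometric series), including the same unaddressed caveat that the gamma-function inequality $\Gamma(\lambda+\mu+m)\le\Gamma(\lambda m+\lambda+\mu)$ really requires $(\lambda-1)m\ge 0$, i.e.\ $\lambda\ge 1$, rather than just $\lambda>-1$. In fact your handling of the factorial --- using $(m+1)!\ge 2^{m}$ in (i), and cancelling $m+1$ against $(m+1)!$ before applying $m!\ge 2^{m-1}$ in (ii) --- is exactly what the stated bounds require, whereas the paper's printed computation carries $m!$ and the factor $m+1$ over verbatim from Lemma~\ref{1} and ends at $\frac{2(\lambda+\mu)+1}{2(\lambda+\mu)-1}$ in (i) and $\frac{(\lambda+\mu)+1}{(\lambda+\mu)-1}$ (under $\lambda+\mu>1$) in (ii), neither of which is the bound actually claimed in the lemma.
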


\begin{proof}
(i) By using the well-known triangle inequality%
\begin{equation*}
\left\vert z_{1}+z_{2}\right\vert \leq \left\vert z_{1}\right\vert
+\left\vert z_{2}\right\vert
\end{equation*}%
with the inequality $\Gamma \left( \lambda +\mu +m\right) \leq \Gamma \left(
m\lambda +\lambda +\mu \right) ,$ $m\in 
\mathbb{N}
,$ which is equivalent to $\frac{\Gamma \left( \lambda +\mu \right) }{\Gamma
\left( m\lambda +\lambda +\mu \right) }\leq \frac{1}{\left( \lambda +\mu
\right) (\lambda +\mu +1)...(\lambda +\mu +m-1)}=\frac{1}{\left( \lambda
+\mu \right) _{m}},$ $m\in 
\mathbb{N}
$ and the inequalities%
\begin{equation*}
\left( \lambda +\mu \right) _{m}\geq \left( \lambda +\mu \right) ^{m},\
m!\geq 2^{m-1},\ m\in 
\mathbb{N}
,
\end{equation*}%
we obtain%
\begin{eqnarray*}
\left\vert \mathbb{W}_{\lambda ,\mu }(z)\right\vert &=&\left\vert
z+\dsum\limits_{m=1}^{\infty }\frac{\Gamma \left( \lambda +\mu \right) }{%
m!\Gamma \left( \lambda m+\lambda +\mu \right) }z^{m+1}\right\vert \leq
1+\dsum\limits_{m=1}^{\infty }\frac{\Gamma \left( \lambda +\mu \right) }{%
m!\Gamma \left( \lambda m+\lambda +\mu \right) } \\
&\leq &1+\dsum\limits_{m=1}^{\infty }\frac{1}{m!\left( \lambda +\mu \right)
_{m}} \\
&\leq &1+\frac{1}{\lambda +\mu }\dsum\limits_{m=1}^{\infty }\left( \frac{1}{%
2\left( \lambda +\mu \right) }\right) ^{m-1} \\
&=&\frac{2\left( \lambda +\mu \right) +1}{2\left( \lambda +\mu \right) -1},%
\text{ \ }2\left( \lambda +\mu \right) >1/2,\ \ \ z\in \mathcal{U}\text{.}
\end{eqnarray*}%
(ii) By using the well-known triangle inequality with the inequality $\frac{%
\Gamma \left( \lambda +\mu \right) }{\Gamma \left( m\lambda +\lambda +\mu
\right) }\leq \frac{1}{\left( \lambda +\mu \right) (\lambda +\mu
+1)...(\lambda +\mu +m-1)}=\frac{1}{\left( \lambda +\mu \right) _{m}},m\in 
\mathbb{N}
$ and the inequalities%
\begin{equation*}
\left( \lambda +\mu \right) _{m}\geq \left( \lambda +\mu \right) ^{m},\
m!\geq \frac{m+1}{2},\ m\in 
\mathbb{N}
,
\end{equation*}%
we have 
\begin{eqnarray*}
\left\vert \mathbb{W}_{\lambda ,\mu }^{\prime }(z)\right\vert &=&\left\vert
1+\dsum\limits_{m=1}^{\infty }\frac{\Gamma \left( \lambda +\mu \right) (m+1)%
}{m!\Gamma \left( \lambda m+\lambda +\mu \right) }z^{m}\right\vert \leq
1+\dsum\limits_{m=1}^{\infty }\frac{\Gamma \left( \lambda +\mu \right) (m+1)%
}{m!\Gamma \left( \lambda m+\lambda +\mu \right) } \\
&\leq &1+\dsum\limits_{m=1}^{\infty }\frac{m+1}{m!\left( \lambda +\mu
\right) _{m}} \\
&\leq &1+\frac{2}{\left( \lambda +\mu \right) }\dsum\limits_{m=1}^{\infty
}\left( \frac{1}{\lambda +\mu }\right) ^{m-1} \\
&=&\frac{\left( \lambda +\mu \right) +1}{\left( \lambda +\mu \right) -1},%
\text{ \ \ }\left( \lambda +\mu \right) >1,\ z\in \mathcal{U}\text{.}
\end{eqnarray*}
\end{proof}

\section{Partial Sums of $\mathcal{W}_{\protect\lambda ,\protect\mu }(z)$}

\begin{theorem}
\label{3}Let $\lambda ,$ $\mu \in 
\mathbb{R}
$ such that $\lambda >-1,$ $\mu >\frac{3}{2}.$ Then%
\begin{equation}
\func{Re}\left\{ \frac{\mathcal{W}_{\lambda ,\mu }(z)}{\left( \mathcal{W}%
_{\lambda ,\mu }\right) _{n}(z)}\right\} \geq \frac{2\mu -3}{2\mu -1}\text{,
\ \ }z\in \mathcal{U}.  \label{m4}
\end{equation}%
and%
\begin{equation}
\func{Re}\left\{ \frac{\left( \mathcal{W}_{\lambda ,\mu }\right) _{n}(z)}{%
\mathcal{W}_{\lambda ,\mu }(z)}\right\} \geq \frac{2\mu -1}{2\mu +1},\text{
\ \ }z\in \mathcal{U}.  \label{m5}
\end{equation}
\end{theorem}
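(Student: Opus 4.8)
The strategy is the classical one for partial sums: for each of the two inequalities I would exhibit an analytic function $g$ on $\mathcal U$ for which the asserted lower bound is exactly the statement $\func{Re}g(z)\geq 0$, and then verify the latter through the equivalence $\func{Re}g(z)\geq0\iff\left|\dfrac{g(z)-1}{g(z)+1}\right|\leq1$. The only analytic input needed is the estimate
\[
\sum_{m=1}^{\infty}\frac{\Gamma(\mu)}{m!\,\Gamma(\lambda m+\mu)}\leq\frac{2}{2\mu-1},
\]
which is precisely what the chain of inequalities in the proof of Lemma~\ref{1}(i) produces (it uses only $(\mu)_m\geq\mu^m$, $m!\geq2^{m-1}$ and $\mu>\tfrac12$); together with the observation that $\mu>\tfrac32$ forces $2\mu-1>2$, this single fact drives everything.

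Write $b_m=\dfrac{\Gamma(\mu)}{m!\,\Gamma(\lambda m+\mu)}\geq0$, so that $\mathcal W_{\lambda,\mu}(z)/z=1+\sum_{m\geq1}b_mz^m$ and $(\mathcal W_{\lambda,\mu})_n(z)/z=1+\sum_{m=1}^{n}b_mz^m$. Since $\left|\sum_{m=1}^{n}b_mz^m\right|\leq\sum_{m\geq1}b_m\leq\frac{2}{2\mu-1}<1$ on $\mathcal U$, neither $\mathcal W_{\lambda,\mu}$ nor $(\mathcal W_{\lambda,\mu})_n$ vanishes on $\mathcal U\setminus\{0\}$ and the quotients in \eqref{m4} and \eqref{m5} are well defined. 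For \eqref{m4} put
\[
g_1(z)=\frac{2\mu-1}{2}\left(\frac{\mathcal W_{\lambda,\mu}(z)}{(\mathcal W_{\lambda,\mu})_n(z)}-\frac{2\mu-3}{2\mu-1}\right).
\]
A direct computation rewrites this as $g_1=\dfrac{1+A}{1+B}$ with $B=\sum_{m=1}^{n}b_mz^m$ and $A=B+\frac{2\mu-1}{2}\sum_{m>n}b_mz^m$, whence $\dfrac{g_1-1}{g_1+1}=\dfrac{A-B}{2+A+B}$. The triangle inequality gives $|A-B|\leq\frac{2\mu-1}{2}\sum_{m>n}b_m$ and $|2+A+B|\geq 2-2\sum_{m=1}^{n}b_m-\frac{2\mu-1}{2}\sum_{m>n}b_m$, so $\left|\dfrac{g_1-1}{g_1+1}\right|\leq1$ provided
\[
(2\mu-1)\sum_{m>n}b_m+2\sum_{m=1}^{n}b_m\leq2 .
\]
Because $2\leq2\mu-1$, the left-hand side is at most $(2\mu-1)\sum_{m\geq1}b_m\leq2$; hence $\func{Re}g_1\geq0$, which is \eqref{m4}.

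For \eqref{m5} I would argue in the same way with
\[
g_2(z)=\frac{2\mu+1}{2}\left(\frac{(\mathcal W_{\lambda,\mu})_n(z)}{\mathcal W_{\lambda,\mu}(z)}-\frac{2\mu-1}{2\mu+1}\right),
\]
which a short computation puts in the form $g_2=\dfrac{1+A}{1+B}$ with $B=\sum_{m\geq1}b_mz^m$ and $A=\sum_{m=1}^{n}b_mz^m-\frac{2\mu-1}{2}\sum_{m>n}b_mz^m$. Now $A-B=-\frac{2\mu+1}{2}\sum_{m>n}b_mz^m$ and $2+A+B=2+2\sum_{m=1}^{n}b_mz^m-\frac{2\mu-3}{2}\sum_{m>n}b_mz^m$, so $\left|\dfrac{g_2-1}{g_2+1}\right|\leq1$ reduces to
\[
\frac{2\mu+1}{2}\sum_{m>n}b_m+\frac{2\mu-3}{2}\sum_{m>n}b_m+2\sum_{m=1}^{n}b_m\leq2,
\]
i.e.\ once again to $(2\mu-1)\sum_{m>n}b_m+2\sum_{m=1}^{n}b_m\leq2$, which was just verified. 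Hence $\func{Re}g_2\geq0$, which is \eqref{m5}.

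The step I expect to be delicate is not the algebra but the final estimate for \eqref{m5}: the crude reduction — showing that the obvious correction term $\frac{2\mu+1}{2}\cdot\frac{\sum_{m>n}b_mz^m}{\mathcal W_{\lambda,\mu}(z)/z}$ has modulus $\leq1$ — fails in general, since it would require $\sum_{m\geq1}b_m\leq\frac{2}{2\mu+3}$, which is not guaranteed by Lemma~\ref{1}. Passing instead through $\left|\frac{g_2-1}{g_2+1}\right|\leq1$ replaces the critical ``$1$'' by a ``$2$'' in the lower bound for $|2+A+B|$, and it is exactly this extra room — combined with splitting $\sum_{m\geq1}b_m=\sum_{m=1}^{n}b_m+\sum_{m>n}b_m$ and using $2\mu-1>2$ rather than bounding the tail by the whole sum — that makes the inequality close. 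Everything else is the geometric-series bookkeeping already carried out in Lemma~\ref{1}.
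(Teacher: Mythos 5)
Your proof is correct and is essentially the paper's own argument: both rest on the coefficient bound $\sum_{m\ge 1}|a_m|\le \tfrac{2}{2\mu-1}$ from Lemma \ref{1}(i), both form the same auxiliary functions $\tfrac{2\mu-1}{2}\bigl(\tfrac{\mathcal W_{\lambda,\mu}}{(\mathcal W_{\lambda,\mu})_n}-\tfrac{2\mu-3}{2\mu-1}\bigr)$ and $\tfrac{2\mu+1}{2}\bigl(\tfrac{(\mathcal W_{\lambda,\mu})_n}{\mathcal W_{\lambda,\mu}}-\tfrac{2\mu-1}{2\mu+1}\bigr)$, and both reduce, via the identity $w=\tfrac{g-1}{g+1}$ (the paper's $\tfrac{1+w}{1-w}$ normalization) and the triangle inequality, to the single inequality $\sum_{m=1}^{n}|a_m|+\tfrac{2\mu-1}{2}\sum_{m>n}|a_m|\le 1$, closed using $2\mu-1>2$. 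Your explicit check that the denominators do not vanish is a small addition the paper omits, but the method is the same.
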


\begin{proof}
By using (i) of Lemma $\ref{1}$, it is clear that 
\begin{equation*}
1+\dsum\limits_{m=1}^{\infty }\left\vert a_{m}\right\vert \leq \frac{2\mu +1%
}{2\mu -1},
\end{equation*}%
which is equivalent to%
\begin{equation*}
\frac{2\mu -1}{2}\dsum\limits_{m=1}^{\infty }\left\vert a_{m}\right\vert
\leq 1.
\end{equation*}%
where $a_{m}=\frac{\Gamma \left( \mu \right) }{m!\Gamma \left( \lambda m+\mu
\right) }.$\ Now, we may write%
\begin{eqnarray*}
&&\frac{2\mu -1}{2}\left\{ \frac{\mathcal{W}_{\lambda ,\mu }(z)}{\left( 
\mathcal{W}_{\lambda ,\mu }\right) _{n}(z)}-\frac{2\mu -3}{2\mu -1}\right\}
\\
&=&\frac{1+\dsum\limits_{m=1}^{n}a_{m}z^{m}+\left( \frac{2\mu -1}{2}\right)
\dsum\limits_{m=n+1}^{\infty }a_{m}z^{m}}{1+\dsum\limits_{m=1}^{n}a_{m}z^{m}}
\\
&=&:\frac{1+w(z)}{1-w(z)}.
\end{eqnarray*}%
Then it is clear that%
\begin{equation*}
w(z)=\frac{\left( \frac{2\mu -1}{2}\right) \dsum\limits_{m=n+1}^{\infty
}a_{m}z^{m}}{2+2\dsum\limits_{m=1}^{n}a_{m}z^{m}+\left( \frac{2\mu -1}{2}%
\right) \dsum\limits_{m=n+1}^{\infty }a_{m}z^{m}}
\end{equation*}%
and%
\begin{equation*}
\left\vert w(z)\right\vert \leq \frac{\left( \frac{2\mu -1}{2}\right)
\dsum\limits_{m=n+1}^{\infty }\left\vert a_{m}\right\vert }{%
2-2\dsum\limits_{m=1}^{n}\left\vert a_{m}\right\vert -\left( \frac{2\mu -1}{2%
}\right) \dsum\limits_{m=n+1}^{\infty }\left\vert a_{m}\right\vert }.
\end{equation*}%
This implies that $\left\vert w\left( z\right) \right\vert \leq 1$ if and
only if%
\begin{equation*}
2\left( \frac{2\mu -1}{2}\right) \dsum\limits_{m=n+1}^{\infty }\left\vert
a_{m}\right\vert \leq 2-2\dsum\limits_{m=1}^{n}\left\vert a_{m}\right\vert .
\end{equation*}%
Which further implies that 
\begin{equation}
\dsum\limits_{m=1}^{n}\left\vert a_{m}\right\vert +\left( \frac{2\mu -1}{2}%
\right) \dsum\limits_{m=n+1}^{\infty }\left\vert a_{m}\right\vert \leq 1.
\label{m6}
\end{equation}%
It suffices to show that the left hand side of $\left( \ref{m6}\right) $ is
bounded above by $\left( \frac{2\mu -1}{2}\right) \dsum\limits_{m=1}^{\infty
}\left\vert a_{m}\right\vert ,$ which is equivalent to%
\begin{equation*}
\frac{2\mu -3}{2}\dsum\limits_{m=1}^{n}\left\vert a_{m}\right\vert \geq 0.
\end{equation*}

To prove $\left( \ref{m5}\right) ,$ we write%
\begin{eqnarray*}
&&\frac{2\mu +1}{2}\left\{ \frac{\left( \mathcal{W}_{\lambda ,\mu }\right)
_{n}(z)}{\mathcal{W}_{\lambda ,\mu }(z)}-\frac{2\mu -1}{2\mu +1}\right\} \\
&=&\frac{1+\dsum\limits_{m=1}^{n}a_{m}z^{m}-\left( \frac{2\mu -1}{2}\right)
\dsum\limits_{m=n+1}^{\infty }a_{m}z^{m}}{1+\dsum\limits_{m=1}^{\infty
}a_{m}z^{m}} \\
&=&\frac{1+w(z)}{1-w(z)}.
\end{eqnarray*}%
Therefore%
\begin{equation*}
\left\vert w(z)\right\vert \leq \frac{\left( \frac{2\mu +1}{2}\right)
\dsum\limits_{m=n+1}^{\infty }\left\vert a_{m}\right\vert }{%
2-2\dsum\limits_{m=1}^{n}\left\vert a_{m}\right\vert -\left( \frac{2\mu -3}{2%
}\right) \dsum\limits_{m=n+1}^{\infty }\left\vert a_{m}\right\vert }\leq 1.
\end{equation*}%
The last inequality is equivalent to%
\begin{equation}
\dsum\limits_{m=1}^{n}\left\vert a_{m}\right\vert +\left( \frac{2\mu -1}{2}%
\right) \dsum\limits_{m=n+1}^{\infty }\left\vert a_{m}\right\vert \leq 1.
\label{m7}
\end{equation}%
Since the left hand side of $\left( \ref{m7}\right) $ is bounded above by $%
\left( \frac{2\mu -1}{2}\right) \dsum\limits_{m=1}^{\infty }\left\vert
a_{m}\right\vert ,$ this completes the proof.
\end{proof}

\begin{theorem}
\label{4}Let $\lambda ,$ $\mu \in 
\mathbb{R}
,\ $\ with $\lambda >-1$ and $\mu >3.$ Then%
\begin{eqnarray}
\func{Re}\left\{ \frac{\mathcal{W}_{\lambda ,\mu }^{\prime }(z)}{\left( 
\mathcal{W}_{\lambda ,\mu }\right) _{n}^{\prime }(z)}\right\} &\geq &\frac{%
\mu -3}{\mu -1},\text{ \ \ \ }z\in \mathcal{U}.  \label{m8} \\
\func{Re}\left\{ \frac{\left( \mathcal{W}_{\lambda ,\mu }\right)
_{n}^{\prime }(z)}{\mathcal{W}_{\lambda ,\mu }^{\prime }(z)}\right\} &\geq &%
\frac{\mu -1}{\mu +1},\text{ \ \ }z\in \mathcal{U}.  \label{m9}
\end{eqnarray}
\end{theorem}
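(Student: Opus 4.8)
The plan is to run the argument of Theorem \ref{3} almost verbatim, replacing the Taylor coefficients of $\mathcal{W}_{\lambda ,\mu }$ by those of its derivative and invoking part (ii) of Lemma \ref{1} instead of part (i). Write $a_{m}=\dfrac{\Gamma (\mu )}{m!\,\Gamma (\lambda m+\mu )}$, so that
\[
\mathcal{W}_{\lambda ,\mu }^{\prime }(z)=1+\sum_{m=1}^{\infty }(m+1)a_{m}z^{m},\qquad \left( \mathcal{W}_{\lambda ,\mu }\right) _{n}^{\prime }(z)=1+\sum_{m=1}^{n}(m+1)a_{m}z^{m}.
\]
The proof of Lemma \ref{1}(ii) in fact estimates the series of absolute values of these coefficients, hence it yields $1+\sum_{m=1}^{\infty }(m+1)\left\vert a_{m}\right\vert \leq \dfrac{\mu +1}{\mu -1}$ for $\mu >1$, equivalently
\[
\frac{\mu -1}{2}\sum_{m=1}^{\infty }(m+1)\left\vert a_{m}\right\vert \leq 1 .
\]
Because $\mu >3$ gives $\frac{\mu -1}{2}>1$, this forces $\sum_{m=1}^{\infty }(m+1)\left\vert a_{m}\right\vert <1$; in particular $\left( \mathcal{W}_{\lambda ,\mu }\right) _{n}^{\prime }$ does not vanish on $\mathcal{U}$, so the quotients below are well defined and the auxiliary function $w$ introduced next is holomorphic on $\mathcal{U}$ with $w(0)=0$.

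To prove $\left( \ref{m8}\right) $ I would write
\[
\frac{\mu -1}{2}\left\{ \frac{\mathcal{W}_{\lambda ,\mu }^{\prime }(z)}{\left( \mathcal{W}_{\lambda ,\mu }\right) _{n}^{\prime }(z)}-\frac{\mu -3}{\mu -1}\right\} =\frac{1+\sum_{m=1}^{n}(m+1)a_{m}z^{m}+\frac{\mu -1}{2}\sum_{m=n+1}^{\infty }(m+1)a_{m}z^{m}}{1+\sum_{m=1}^{n}(m+1)a_{m}z^{m}}=:\frac{1+w(z)}{1-w(z)},
\]
solve for $w(z)$, and use the triangle inequality to get
\[
\left\vert w(z)\right\vert \leq \frac{\frac{\mu -1}{2}\sum_{m=n+1}^{\infty }(m+1)\left\vert a_{m}\right\vert }{2-2\sum_{m=1}^{n}(m+1)\left\vert a_{m}\right\vert -\frac{\mu -1}{2}\sum_{m=n+1}^{\infty }(m+1)\left\vert a_{m}\right\vert } .
\]
A short computation shows that $\left\vert w(z)\right\vert \leq 1$ is equivalent to
\[
\sum_{m=1}^{n}(m+1)\left\vert a_{m}\right\vert +\frac{\mu -1}{2}\sum_{m=n+1}^{\infty }(m+1)\left\vert a_{m}\right\vert \leq 1 ,
\]
and the left-hand side is bounded above by $\frac{\mu -1}{2}\sum_{m=1}^{\infty }(m+1)\left\vert a_{m}\right\vert \leq 1$, since the difference of the two sides equals $\frac{\mu -3}{2}\sum_{m=1}^{n}(m+1)\left\vert a_{m}\right\vert \geq 0$. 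As $\left\vert w(z)\right\vert \leq 1$ implies $\func{Re}\dfrac{1+w(z)}{1-w(z)}\geq 0$, the bound $\left( \ref{m8}\right) $ follows.

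For $\left( \ref{m9}\right) $ I would instead set
\[
\frac{\mu +1}{2}\left\{ \frac{\left( \mathcal{W}_{\lambda ,\mu }\right) _{n}^{\prime }(z)}{\mathcal{W}_{\lambda ,\mu }^{\prime }(z)}-\frac{\mu -1}{\mu +1}\right\} =\frac{1+\sum_{m=1}^{n}(m+1)a_{m}z^{m}-\frac{\mu -1}{2}\sum_{m=n+1}^{\infty }(m+1)a_{m}z^{m}}{1+\sum_{m=1}^{\infty }(m+1)a_{m}z^{m}}=:\frac{1+w(z)}{1-w(z)},
\]
so that $w(z)$ has numerator $-\frac{\mu +1}{2}\sum_{m=n+1}^{\infty }(m+1)a_{m}z^{m}$ and denominator $2+2\sum_{m=1}^{n}(m+1)a_{m}z^{m}+\frac{3-\mu }{2}\sum_{m=n+1}^{\infty }(m+1)a_{m}z^{m}$. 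Estimating $\left\vert w(z)\right\vert $ by the triangle inequality and clearing denominators, the condition $\left\vert w(z)\right\vert \leq 1$ reduces once more to the \emph{same} inequality $\sum_{m=1}^{n}(m+1)\left\vert a_{m}\right\vert +\frac{\mu -1}{2}\sum_{m=n+1}^{\infty }(m+1)\left\vert a_{m}\right\vert \leq 1$ already verified above, and $\left( \ref{m9}\right) $ follows as before.

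The computations are routine; the only points that need a little care are (a) extracting the coefficient estimate $\frac{\mu -1}{2}\sum_{m\geq 1}(m+1)\left\vert a_{m}\right\vert \leq 1$ from the \emph{proof} of Lemma \ref{1}(ii) rather than from its statement, and (b) checking that the denominators occurring in the bounds for $\left\vert w(z)\right\vert $ are positive — this is precisely where the stronger hypothesis $\mu >3$ (as opposed to the $\mu >1$ needed for Lemma \ref{1}(ii)) enters, since it guarantees $\sum_{m\geq 1}(m+1)\left\vert a_{m}\right\vert <1$. Beyond this bookkeeping I do not anticipate any genuine obstacle.
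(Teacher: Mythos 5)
Your proposal is correct and follows essentially the same route as the paper: it extracts the coefficient bound $\frac{\mu-1}{2}\sum_{m\geq 1}(m+1)\left\vert a_{m}\right\vert\leq 1$ from Lemma \ref{1}(ii), forms the same M\"obius-type quotient $\frac{1+w(z)}{1-w(z)}$ for each of \eqref{m8} and \eqref{m9}, and reduces both to the identical tail inequality. The only differences are cosmetic: you supply a few justifications (non-vanishing of the partial-sum derivative, positivity of the denominators) that the paper leaves implicit.
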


\begin{proof}
From part (ii) of Lemma \ref{1}, we observe that%
\begin{equation*}
1+\dsum\limits_{m=1}^{\infty }(m+1)\left\vert a_{m}\right\vert \leq \frac{%
\mu +1}{\mu -1},
\end{equation*}%
where $a_{m}=\frac{\Gamma \left( \mu \right) }{m!\Gamma \left( \lambda m+\mu
\right) }.$ This implies that%
\begin{equation*}
\left( \frac{\mu -1}{2}\right) \dsum\limits_{m=1}^{\infty }(m+1)\left\vert
a_{m}\right\vert \leq 1.
\end{equation*}%
Consider%
\begin{eqnarray*}
&&\left( \frac{\mu -1}{2}\right) \left\{ \frac{\mathcal{W}_{\lambda ,\mu
}^{\prime }(z)}{\left( \mathcal{W}_{\lambda ,\mu }\right) _{n}^{\prime }(z)}-%
\frac{\mu -3}{\mu -1}\right\} \\
&=&\frac{1+\dsum\limits_{m=1}^{n}(m+1)a_{m}z^{m}+\left( \frac{\mu -1}{2}%
\right) \dsum\limits_{m=n+1}^{\infty }(m+1)a_{m}z^{m}}{1+\dsum%
\limits_{m=1}^{n}(m+1)a_{m}z^{m}} \\
&=&\frac{1+w(z)}{1-w(z)}.
\end{eqnarray*}%
Therefore%
\begin{equation*}
\left\vert w(z)\right\vert \leq \frac{\left( \frac{\mu -1}{2}\right)
\dsum\limits_{m=n+1}^{\infty }(m+1)\left\vert a_{m}\right\vert }{%
2-2\dsum\limits_{m=1}^{n}(m+1)\left\vert a_{m}\right\vert -\left( \frac{\mu
-1}{2}\right) \dsum\limits_{m=n+1}^{\infty }(m+1)\left\vert a_{m}\right\vert 
}\leq 1.
\end{equation*}%
The last inequality is equivalent to%
\begin{equation}
\dsum\limits_{m=1}^{n}(m+1)\left\vert a_{m}\right\vert +\left( \frac{\mu -1}{%
2}\right) \dsum\limits_{m=n+1}^{\infty }(m+1)\left\vert a_{m}\right\vert
\leq 1.  \label{m10}
\end{equation}%
It suffices to show that the left hand side of $\left( \ref{m10}\right) $ is
bounded above by

$\left( \frac{\mu -1}{2}\right) \dsum\limits_{m=1}^{\infty }\left\vert
a_{m}\right\vert (m+1).$ Which is equivalent to $\frac{\mu -3}{2}%
\dsum\limits_{m=1}^{n}(m+1)\left\vert a_{m}\right\vert \geq 0.$

To prove the result $\left( \ref{m9}\right) ,$ we write%
\begin{eqnarray*}
&&\left( \frac{\mu +1}{2}\right) \left\{ \frac{\left( \mathcal{W}_{\lambda
,\mu }\right) _{n}^{\prime }(z)}{\mathcal{W}_{\lambda ,\mu }^{\prime }(z)}-%
\frac{\mu -1}{\mu +1}\right\} \\
&=&\frac{1+w(z)}{1-w(z)}.
\end{eqnarray*}%
Therefore%
\begin{equation*}
\left\vert w(z)\right\vert \leq \frac{\left( \frac{\mu +1}{2}\right)
\dsum\limits_{m=n+1}^{\infty }(m+1)\left\vert a_{m}\right\vert }{%
2-2\dsum\limits_{m=1}^{n}(m+1)\left\vert a_{m}\right\vert -\frac{\mu -3}{2}%
\dsum\limits_{m=n+1}^{\infty }(m+1)\left\vert a_{m}\right\vert }\leq 1.
\end{equation*}%
The last inequality is equivalent to%
\begin{equation}
\dsum\limits_{m=1}^{n}\left\vert a_{m}\right\vert (m+1)+\frac{\mu -1}{2}%
\dsum\limits_{m=n+1}^{\infty }(m+1)\left\vert a_{m}\right\vert \leq 1.
\label{m11}
\end{equation}%
It suffices to show that the left hand side of $\left( \ref{m11}\right) $ is
bounded above by

$\frac{\mu -1}{2}\dsum\limits_{m=1}^{\infty }(m+1)\left\vert
a_{m}\right\vert ,\ $the proof is complete.
\end{proof}

\begin{theorem}
Let $\lambda ,$ $\mu \in 
\mathbb{R}
,\ $\ with $\lambda >-1$ and $\mu >1.$ Then%
\begin{equation}
\func{Re}\left\{ \frac{\mathbb{A}\left[ \mathcal{W}_{\lambda ,\mu }\right]
(z)}{\left( \mathbb{A}\left[ \mathcal{W}_{\lambda ,\mu }\right] \right)
_{n}(z)}\right\} \geq \frac{2\mu -2}{2\mu -1},\text{ \ \ \ }z\in \mathcal{U},
\label{m12}
\end{equation}%
and%
\begin{equation}
\func{Re}\left\{ \frac{\left( \mathbb{A}\left[ \mathcal{W}_{\lambda ,\mu }%
\right] \right) _{n}(z)}{\mathbb{A}\left[ \mathcal{W}_{\lambda ,\mu }\right]
(z)}\right\} \geq \frac{2\mu -1}{2\mu },\text{ \ \ \ }z\in \mathcal{U},
\label{m13}
\end{equation}%
where $\mathbb{A}\left[ \mathcal{W}_{\lambda ,\mu }\right] $ is the
Alexander transform of $\mathcal{W}_{\lambda ,\mu }.$
\end{theorem}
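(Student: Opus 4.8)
The plan is to follow, essentially line for line, the method of Theorem~\ref{3}, now applied to the Alexander transform. Write
\begin{equation*}
\mathbb{A}\left[ \mathcal{W}_{\lambda ,\mu }\right] (z)=z+\sum_{m=1}^{\infty }c_{m}z^{m+1},\qquad c_{m}=\frac{\Gamma \left( \mu \right) }{\left( m+1\right) !\,\Gamma \left( \lambda m+\mu \right) }>0,
\end{equation*}
so that $\left( \mathbb{A}\left[ \mathcal{W}_{\lambda ,\mu }\right] \right) _{n}(z)=z+\sum_{m=1}^{n}c_{m}z^{m+1}$. The first step is to observe that the chain of inequalities proving part (iii) of Lemma~\ref{1} is really a bound on $1+\sum_{m=1}^{\infty }c_{m}$ obtained through the triangle inequality, so that for $\mu >\frac{1}{2}$ one has $1+\sum_{m=1}^{\infty }c_{m}\leq \frac{2\mu }{2\mu -1}$, equivalently $\left( 2\mu -1\right) \sum_{m=1}^{\infty }c_{m}\leq 1$.

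For \eqref{m12} I would follow Theorem~\ref{3} verbatim: rewrite $\left( 2\mu -1\right) \bigl\{ \mathbb{A}[\mathcal{W}_{\lambda ,\mu }](z)/(\mathbb{A}[\mathcal{W}_{\lambda ,\mu }])_{n}(z)-(2\mu -2)/(2\mu -1)\bigr\}$ in the form $(1+w(z))/(1-w(z))$ with
\begin{equation*}
w(z)=\frac{\left( 2\mu -1\right) \sum_{m=n+1}^{\infty }c_{m}z^{m}}{2+2\sum_{m=1}^{n}c_{m}z^{m}+\left( 2\mu -1\right) \sum_{m=n+1}^{\infty }c_{m}z^{m}},
\end{equation*}
the factor $2\mu -1$ being chosen precisely so that the numerator of $\left( 2\mu -1\right) \mathbb{A}[\mathcal{W}_{\lambda ,\mu }](z)-\left( 2\mu -2\right) (\mathbb{A}[\mathcal{W}_{\lambda ,\mu }])_{n}(z)$ collapses to $z+\sum_{m=1}^{n}c_{m}z^{m+1}+(2\mu -1)\sum_{m=n+1}^{\infty }c_{m}z^{m+1}$ while its denominator equals $z+\sum_{m=1}^{n}c_{m}z^{m+1}$. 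Since $\left\vert w\right\vert \leq 1$ implies $\func{Re}\frac{1+w}{1-w}\geq 0$, it suffices to check $\left\vert w(z)\right\vert \leq 1$ on $\mathcal{U}$; bounding the numerator by the triangle inequality and the denominator from below (using $\left\vert z\right\vert <1$ and $c_{m}>0$), this reduces to
\begin{equation*}
\sum_{m=1}^{n}c_{m}+\left( 2\mu -1\right) \sum_{m=n+1}^{\infty }c_{m}\leq 1 ,
\end{equation*}
and since $\mu >1$ gives $1\leq 2\mu -1$, the left side is at most $\left( 2\mu -1\right) \sum_{m=1}^{\infty }c_{m}\leq 1$ by the bound noted above. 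This proves \eqref{m12}.

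For \eqref{m13} I would run the same argument on $2\mu \bigl\{ (\mathbb{A}[\mathcal{W}_{\lambda ,\mu }])_{n}(z)/\mathbb{A}[\mathcal{W}_{\lambda ,\mu }](z)-(2\mu -1)/(2\mu )\bigr\}$, the factor $2\mu $ now chosen so that the numerator collapses to $1+\sum_{m=1}^{n}c_{m}z^{m}-(2\mu -1)\sum_{m=n+1}^{\infty }c_{m}z^{m}$; the corresponding $w(z)$ then has numerator $-2\mu \sum_{m=n+1}^{\infty }c_{m}z^{m}$, and $\left\vert w(z)\right\vert \leq 1$ on $\mathcal{U}$ reduces, after the same elementary estimates, to the identical condition $\sum_{m=1}^{n}c_{m}+\left( 2\mu -1\right) \sum_{m=n+1}^{\infty }c_{m}\leq 1$, valid for $\mu >1$. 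Hence \eqref{m13} follows.

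I do not expect a genuine obstacle. The two points that need care are: (a) choosing the normalizing constants---$2\mu -1$ for \eqref{m12} and $2\mu $ for \eqref{m13}---so that in each case the numerator simplifies to the shape $1+\sum_{m=1}^{n}c_{m}z^{m}\pm(\cdots )\sum_{m=n+1}^{\infty }c_{m}z^{m}$ matching the partial-sum denominator up to the tail; and (b) checking that the hypothesis $\mu >1$ is exactly what validates the comparison $1\leq 2\mu -1$ and keeps every denominator positive, so that the implication $\left\vert w\right\vert \leq 1\Rightarrow \func{Re}\frac{1+w}{1-w}\geq 0$ may be invoked. Beyond that bookkeeping the computation is identical in spirit to Theorems~\ref{3} and \ref{4}, and no new idea is required.
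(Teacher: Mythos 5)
Your proposal is correct and follows essentially the same route as the paper: the coefficients $c_{m}$ you introduce are exactly the $\frac{a_{m}}{m+1}$ of the paper's proof, the bound $(2\mu-1)\sum_{m\geq 1}c_{m}\leq 1$ is precisely what the paper extracts from Lemma \ref{1}(iii), and the $w(z)$ you construct for \eqref{m12} coincides with the one in the text, while for \eqref{m13} the paper simply defers to the method of Theorem \ref{3}, which is the computation you carry out explicitly. No discrepancy to report.
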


\begin{proof}
To prove $\left( \ref{m12}\right) ,\ $we consider from part (iii) of Lemma %
\ref{1} so that%
\begin{equation*}
1+\dsum\limits_{m=1}^{\infty }\frac{\left\vert a_{m}\right\vert }{\left(
m+1\right) }\leq \frac{2\mu }{2\mu -1},
\end{equation*}%
which is equvalent to%
\begin{equation*}
\left( 2\mu -1\right) \dsum\limits_{m=1}^{\infty }\frac{\left\vert
a_{m}\right\vert }{\left( m+1\right) }\leq 1,
\end{equation*}%
where $a_{m}=\frac{\Gamma \left( \mu \right) }{m!\Gamma \left( \lambda m+\mu
\right) }.\ $Now, we write%
\begin{eqnarray*}
&&\left( 2\mu -1\right) \left\{ \frac{\mathbb{A}\left[ \mathcal{W}_{\lambda
,\mu }\right] (z)}{\left( \mathbb{A}\left[ \mathcal{W}_{\lambda ,\mu }\right]
\right) _{n}(z)}-\frac{2\mu -2}{2\mu -1}\right\} \\
&=&\frac{1+\dsum\limits_{m=1}^{n}\frac{a_{m}}{\left( m+1\right) }%
z^{m}+\left( 2\mu -1\right) \dsum\limits_{m=n+1}^{\infty }\frac{a_{m}}{%
\left( m+1\right) }z^{m}}{1+\dsum\limits_{m=1}^{n}\frac{a_{m}}{\left(
m+1\right) }z^{m}} \\
&=&\frac{1+w(z)}{1-w(z)},
\end{eqnarray*}%
where%
\begin{equation*}
\left\vert w(z)\right\vert \leq \frac{\left( 2\mu -1\right)
\dsum\limits_{m=n+1}^{\infty }\frac{\left\vert a_{m}\right\vert }{\left(
m+1\right) }}{2-2\dsum\limits_{m=1}^{n}\frac{\left\vert a_{m}\right\vert }{%
\left( m+1\right) }-\left( 2\mu -1\right) \dsum\limits_{m=n+1}^{\infty }%
\frac{\left\vert a_{m}\right\vert }{\left( m+1\right) }}\leq 1.
\end{equation*}%
The last inequality is equivalent to%
\begin{equation}
\dsum\limits_{m=1}^{n}\frac{\left\vert a_{m}\right\vert }{\left( m+1\right) }%
+\left( 2\mu -1\right) \dsum\limits_{m=n+1}^{\infty }\frac{\left\vert
a_{m}\right\vert }{\left( m+1\right) }\leq 1.  \label{m14}
\end{equation}%
It suffices to show that the left hand side of $\left( \ref{m14}\right) $ is
bounded above by

$(2\mu -1)\dsum\limits_{m=1}^{\infty }\frac{\left\vert a_{m}\right\vert }{%
\left( m+1\right) },$ which is equivalent to $(2\mu
-2)\dsum\limits_{m=1}^{\infty }\frac{\left\vert a_{m}\right\vert }{\left(
m+1\right) }\geq 0.$ This completes the proof.

The proof of $\left( \ref{m13}\right) $ is similar to the proof of Theorem %
\ref{3}.
\end{proof}

\textbf{Remark 2.4.} \label{rm1} For $\lambda =1,$ $\mu =5/2$ we get $%
\mathcal{W}_{1,5/2}(-z)=\frac{3}{4}\left( \frac{\sin (2\sqrt{z})}{2\sqrt{z}}%
-\cos (2\sqrt{z})\right) ,$ and for $n=0,$ we have $\left( \mathcal{W}%
_{1,5/2}\right) _{0}(z)=z,$ so, 
\begin{equation}
\func{Re}\left( \frac{\sin (2\sqrt{z})-2\sqrt{z}\cos (2\sqrt{z})}{2z\sqrt{z}}%
\right) \geq \frac{2}{3}\ \ \ \left( z\in \mathcal{U}\right) ,  \label{r1}
\end{equation}%
and%
\begin{equation}
\func{Re}\left( \frac{2z\sqrt{z}}{\sin (2\sqrt{z})-2\sqrt{z}\cos (2\sqrt{z})}%
\right) \geq \frac{1}{2}\ \ \ \left( z\in \mathcal{U}\right) .  \label{r2}
\end{equation}%
The image domains of $f(z)=\frac{\sin (2\sqrt{z})-2\sqrt{z}\cos (2\sqrt{z})}{%
2z\sqrt{z}}$ and $g(z)=\frac{2z\sqrt{z}}{\sin (2\sqrt{z})-2\sqrt{z}\cos (2%
\sqrt{z})}$ are shown in Figure 1.%
\begin{equation*}
\FRAME{itbpF}{3.4134in}{1.7893in}{0in}{}{}{Figure}{\special{language
"Scientific Word";type "GRAPHIC";maintain-aspect-ratio TRUE;display
"USEDEF";valid_file "T";width 3.4134in;height 1.7893in;depth
0in;original-width 5.073in;original-height 2.6455in;cropleft "0";croptop
"1";cropright "1";cropbottom "0";tempfilename
'NQCMVY01.wmf';tempfile-properties "XPR";}}
\end{equation*}

\section{Partial Sums of $\mathbb{W}_{\protect\lambda ,\protect\mu }(z)$}

\begin{theorem}
Let $\lambda ,$ $\mu \in 
\mathbb{R}
,\ $\ with $\lambda >-1$ and $\mu +\lambda >1.$ Then%
\begin{equation}
\func{Re}\left\{ \frac{\mathbb{W}_{\lambda ,\mu }(z)}{\left( \mathbb{W}%
_{\lambda ,\mu }\right) _{n}(z)}\right\} \geq \frac{2\left( \lambda +\mu
\right) -2}{2\left( \lambda +\mu \right) -1}\text{, \ \ }z\in \mathcal{U},
\label{m15}
\end{equation}%
and%
\begin{equation}
\func{Re}\left\{ \frac{\left( \mathbb{W}_{\lambda ,\mu }\right) _{n}(z)}{%
\mathbb{W}_{\lambda ,\mu }(z)}\right\} \geq \frac{2\left( \lambda +\mu
\right) -1}{2\left( \lambda +\mu \right) },\text{ \ \ }z\in \mathcal{U},
\label{m16}
\end{equation}%
where $\mathbb{W}_{\lambda ,\mu }(z)$ is the normalized Wright function.
\end{theorem}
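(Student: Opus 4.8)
The plan is to mirror the arguments of Section~2, in particular the proof of Theorem~\ref{3}, with $\mathcal{W}_{\lambda ,\mu }$ replaced by $\mathbb{W}_{\lambda ,\mu }$ and the parameter $\mu $ replaced by $\lambda +\mu $. Denote the coefficients of $\mathbb{W}_{\lambda ,\mu }$ by
\begin{equation*}
b_{m}=\frac{\Gamma \left( \lambda +\mu \right) }{\left( m+1\right) !\,\Gamma \left( \lambda m+\lambda +\mu \right) },
\end{equation*}
so that $\mathbb{W}_{\lambda ,\mu }(z)=z+\dsum\limits_{m=1}^{\infty }b_{m}z^{m+1}$ and $\left( \mathbb{W}_{\lambda ,\mu }\right) _{n}(z)=z+\dsum\limits_{m=1}^{n}b_{m}z^{m+1}$. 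The first step is to record, from part~(i) of Lemma~\ref{2} (more precisely, from the triangle--inequality estimate established in its proof), that
\begin{equation*}
1+\dsum\limits_{m=1}^{\infty }\left\vert b_{m}\right\vert \leq \frac{2\left( \lambda +\mu \right) }{2\left( \lambda +\mu \right) -1},\qquad \text{equivalently}\qquad \left( 2\left( \lambda +\mu \right) -1\right) \dsum\limits_{m=1}^{\infty }\left\vert b_{m}\right\vert \leq 1,
\end{equation*}
which is legitimate since $\lambda +\mu >1>\frac{1}{2}$.

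To establish $\left( \ref{m15}\right) $ I would form the quotient
\begin{equation*}
\left( 2\left( \lambda +\mu \right) -1\right) \left\{ \frac{\mathbb{W}_{\lambda ,\mu }(z)}{\left( \mathbb{W}_{\lambda ,\mu }\right) _{n}(z)}-\frac{2\left( \lambda +\mu \right) -2}{2\left( \lambda +\mu \right) -1}\right\} =\frac{1+\dsum\limits_{m=1}^{n}b_{m}z^{m}+\left( 2\left( \lambda +\mu \right) -1\right) \dsum\limits_{m=n+1}^{\infty }b_{m}z^{m}}{1+\dsum\limits_{m=1}^{n}b_{m}z^{m}}=:\frac{1+w(z)}{1-w(z)},
\end{equation*}
solve for $w(z)$, and estimate $\left\vert w(z)\right\vert $ for $z\in \mathcal{U}$ by replacing each $z^{m}$ by $1$ in the numerator and in the denominator (using the reverse triangle inequality in the denominator). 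The condition $\left\vert w(z)\right\vert \leq 1$ then reduces, after clearing denominators, to
\begin{equation*}
\dsum\limits_{m=1}^{n}\left\vert b_{m}\right\vert +\left( 2\left( \lambda +\mu \right) -1\right) \dsum\limits_{m=n+1}^{\infty }\left\vert b_{m}\right\vert \leq 1.
\end{equation*}
Finally I would note that the left--hand side is bounded above by $\left( 2\left( \lambda +\mu \right) -1\right) \dsum\limits_{m=1}^{\infty }\left\vert b_{m}\right\vert \leq 1$, because the difference of the two sides equals $\left( 2\left( \lambda +\mu \right) -2\right) \dsum\limits_{m=1}^{n}\left\vert b_{m}\right\vert \geq 0$; this is exactly the point at which the hypothesis $\lambda +\mu >1$ (rather than merely $\lambda +\mu >\frac{1}{2}$) is used. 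Since $\func{Re}\frac{1+w}{1-w}\geq 0$ whenever $\left\vert w\right\vert \leq 1$ and $w(0)=0$, inequality $\left( \ref{m15}\right) $ follows.

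For $\left( \ref{m16}\right) $ I would repeat the argument with the quotient
\begin{equation*}
2\left( \lambda +\mu \right) \left\{ \frac{\left( \mathbb{W}_{\lambda ,\mu }\right) _{n}(z)}{\mathbb{W}_{\lambda ,\mu }(z)}-\frac{2\left( \lambda +\mu \right) -1}{2\left( \lambda +\mu \right) }\right\} =\frac{1+\dsum\limits_{m=1}^{n}b_{m}z^{m}-\left( 2\left( \lambda +\mu \right) -1\right) \dsum\limits_{m=n+1}^{\infty }b_{m}z^{m}}{1+\dsum\limits_{m=1}^{\infty }b_{m}z^{m}}=:\frac{1+w(z)}{1-w(z)};
\end{equation*}
after the same estimate, the condition $\left\vert w(z)\right\vert \leq 1$ again collapses to $\dsum\limits_{m=1}^{n}\left\vert b_{m}\right\vert +\left( 2\left( \lambda +\mu \right) -1\right) \dsum\limits_{m=n+1}^{\infty }\left\vert b_{m}\right\vert \leq 1$, which has already been verified above. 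I do not expect a genuine obstacle; the only points needing care are the algebraic bookkeeping that turns $\left\vert w(z)\right\vert \leq 1$ into the clean coefficient inequality, and the verification that the truncated weighted tail is dominated by the full weighted series $\left( 2\left( \lambda +\mu \right) -1\right) \dsum\limits_{m=1}^{\infty }\left\vert b_{m}\right\vert $, which is precisely where the strengthened hypothesis $\mu +\lambda >1$ enters.
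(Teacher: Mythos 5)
Your proposal is correct and follows essentially the same route as the paper: it invokes Lemma \ref{2}(i) to get $\left(2(\lambda+\mu)-1\right)\sum_{m\geq 1}|b_m|\leq 1$, forms the same M\"obius-type quotient $\frac{1+w(z)}{1-w(z)}$, and reduces $|w(z)|\leq 1$ to the same coefficient inequality, with $\lambda+\mu>1$ entering exactly where you say it does. No substantive difference from the paper's argument.
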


\begin{proof}
By using Lemma $\ref{2}$ (i), It is clear that 
\begin{equation*}
1+\dsum\limits_{m=1}^{\infty }\left\vert a_{m}\right\vert \leq \frac{2\left(
\lambda +\mu \right) }{2\left( \lambda +\mu \right) -1},
\end{equation*}%
where $a_{m}=\frac{\Gamma \left( \lambda +\mu \right) }{\left( m+1\right)
!\Gamma \left( \lambda m+\lambda +\mu \right) }.$ This implies that%
\begin{equation*}
\left\{ 2\left( \lambda +\mu \right) -1\right\} \dsum\limits_{m=1}^{\infty
}\left\vert a_{m}\right\vert \leq 1.
\end{equation*}%
Now we may write%
\begin{eqnarray*}
&&\left\{ 2\left( \lambda +\mu \right) -1\right\} \left\{ \frac{\mathbb{W}%
_{\lambda ,\mu }(z)}{\left( \mathbb{W}_{\lambda ,\mu }\right) _{n}(z)}-\frac{%
2\left( \lambda +\mu \right) -2}{2\left( \lambda +\mu \right) -1}\right\} \\
&=&\frac{1+\dsum\limits_{m=1}^{n}a_{m}z^{m}+\left\{ 2\left( \lambda +\mu
\right) -1\right\} \dsum\limits_{m=n+1}^{\infty }a_{m}z^{m}}{%
1+\dsum\limits_{m=1}^{n}a_{m}z^{m}} \\
&=&\frac{1+w(z)}{1-w(z)}.
\end{eqnarray*}%
It is clear that%
\begin{equation*}
w(z)=\frac{\left\{ 2\left( \lambda +\mu \right) -1\right\}
\dsum\limits_{m=n+1}^{\infty }a_{m}z^{m}}{2+2\dsum%
\limits_{m=1}^{n}a_{m}z^{m}+\left\{ 2\left( \lambda +\mu \right) -1\right\}
\dsum\limits_{m=n+1}^{\infty }a_{m}z^{m}},
\end{equation*}%
and%
\begin{equation*}
\left\vert w(z)\right\vert \leq \frac{\left\{ 2\left( \lambda +\mu \right)
-1\right\} \dsum\limits_{m=n+1}^{\infty }\left\vert a_{m}\right\vert }{%
2-2\dsum\limits_{m=1}^{n}\left\vert a_{m}\right\vert -\left\{ 2\left(
\lambda +\mu \right) -1\right\} \dsum\limits_{m=n+1}^{\infty }\left\vert
a_{m}\right\vert }.
\end{equation*}%
This implies that $\left\vert w\left( z\right) \right\vert \leq 1$ if and
only if%
\begin{equation}
\dsum\limits_{m=1}^{n}\left\vert a_{m}\right\vert +\left\{ 2\left( \lambda
+\mu \right) -1\right\} \dsum\limits_{m=n+1}^{\infty }\left\vert
a_{m}\right\vert \leq 1.  \label{m17}
\end{equation}%
It suffices to show that the left hand side of $\left( \ref{m17}\right) $ is
bounded above by

$\left\{ 2\left( \lambda +\mu \right) -1\right\} \dsum\limits_{m=1}^{\infty
}\left\vert a_{m}\right\vert ,\ $which is equivalent to $\left\{ 2\left(
\lambda +\mu \right) -2\right\} \dsum\limits_{m=1}^{\infty }\left\vert
a_{m}\right\vert \geq 0.$

To prove $\left( \ref{m16}\right) ,$ we consider that%
\begin{equation*}
2\left( \lambda +\mu \right) \left\{ \frac{\left( \mathbb{W}_{\lambda ,\mu
}\right) _{n}(z)}{\mathbb{W}_{\lambda ,\mu }(z)}-\frac{2\left( \lambda +\mu
\right) -1}{2\left( \lambda +\mu \right) }\right\} .
\end{equation*}%
\begin{eqnarray*}
&=&\frac{1+\dsum\limits_{m=1}^{n}a_{m}z^{m}+\left\{ 2\left( \lambda +\mu
\right) -1\right\} \dsum\limits_{m=n+1}^{\infty }a_{m}z^{m}}{%
1+\dsum\limits_{m=1}^{\infty }a_{m}z^{m}} \\
&=&\frac{1+w(z)}{1-w(z)}.
\end{eqnarray*}%
Therefore%
\begin{equation*}
\left\vert w(z)\right\vert \leq \frac{\left\{ 2\left( \lambda +\mu \right)
\right\} \dsum\limits_{m=n+1}^{\infty }\left\vert a_{m}\right\vert }{%
2-2\dsum\limits_{m=1}^{n}\left\vert a_{m}\right\vert -\left\{ 2\left(
\lambda +\mu \right) -2\right\} \dsum\limits_{m=n+1}^{\infty }\left\vert
a_{m}\right\vert }.
\end{equation*}%
The last inequality is equivalent to%
\begin{equation}
\dsum\limits_{m=1}^{n}\left\vert a_{m}\right\vert +\left\{ 2\left( \lambda
+\mu \right) -1\right\} \dsum\limits_{m=n+1}^{\infty }\left\vert
a_{m}\right\vert \leq 1.  \label{m18}
\end{equation}%
Since the left hand side of $\left( \ref{m18}\right) $ is bounded above by $%
\left\{ 2\left( \lambda +\mu \right) -1\right\} \dsum\limits_{m=1}^{\infty
}\left\vert a_{m}\right\vert ,\ $the proof is complete.
\end{proof}

Similarly, we have the following result.

\begin{theorem}
\label{5}Let $\lambda ,$ $\mu \in 
\mathbb{R}
,\ $\ with $\lambda >-1$ and $\mu +\lambda >\frac{3}{2}.$ Then%
\begin{equation}
\func{Re}\left\{ \frac{\mathbb{W}_{\lambda ,\mu }^{\prime }(z)}{\left( 
\mathbb{W}_{\lambda ,\mu }\right) _{n}^{\prime }(z)}\right\} \geq \frac{%
2\left( \lambda +\mu \right) -3}{2\left( \lambda +\mu \right) -1}\text{, \ \ 
}z\in \mathcal{U},  \label{m19}
\end{equation}%
and%
\begin{equation}
\func{Re}\left\{ \frac{\left( \mathbb{W}_{\lambda ,\mu }\right) _{n}^{\prime
}(z)}{\mathbb{W}_{\lambda ,\mu }^{\prime }(z)}\right\} \geq \frac{2\left(
\lambda +\mu \right) -1}{2\left( \lambda +\mu \right) +1},\text{ \ \ }z\in 
\mathcal{U},  \label{m20}
\end{equation}%
where $\mathbb{W}_{\lambda ,\mu }(z)$ is the normalized Wright function.
\end{theorem}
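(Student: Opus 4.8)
The plan is to run, almost verbatim, the Silverman--Owa partial-sum scheme already used for Theorems~\ref{3} and~\ref{4}, with the single change that part~(ii) of Lemma~\ref{2} replaces part~(ii) of Lemma~\ref{1} and the parameter $\mu$ is everywhere shifted to $\lambda+\mu$. Write $\kappa=\lambda+\mu$ and $a_m=\dfrac{\Gamma(\lambda+\mu)}{(m+1)!\,\Gamma(\lambda m+\lambda+\mu)}$, so that $\mathbb{W}_{\lambda,\mu}'(z)=1+\sum_{m\ge1}(m+1)a_m z^{m}$ and $(\mathbb{W}_{\lambda,\mu})_n'(z)=1+\sum_{m=1}^{n}(m+1)a_m z^{m}$. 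From Lemma~\ref{2}(ii) one has $1+\sum_{m\ge1}(m+1)|a_m|\le\frac{2\kappa+1}{2\kappa-1}$, which rearranges to the coefficient estimate
\[
\frac{2\kappa-1}{2}\sum_{m\ge1}(m+1)|a_m|\le 1 .
\]
Since $\kappa>\tfrac32$ this in particular gives $\sum_{m\ge1}(m+1)|a_m|<1$, so $\mathbb{W}_{\lambda,\mu}'$ and every $(\mathbb{W}_{\lambda,\mu})_n'$ are zero-free on $\mathcal{U}$ and all the ratios below are well defined.

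For \eqref{m19} I would set
\[
\frac{2\kappa-1}{2}\left\{\frac{\mathbb{W}_{\lambda,\mu}'(z)}{(\mathbb{W}_{\lambda,\mu})_n'(z)}-\frac{2\kappa-3}{2\kappa-1}\right\}=:\frac{1+w(z)}{1-w(z)},
\]
read off
\[
w(z)=\frac{\frac{2\kappa-1}{2}\sum_{m=n+1}^{\infty}(m+1)a_m z^{m}}{2+2\sum_{m=1}^{n}(m+1)a_m z^{m}+\frac{2\kappa-1}{2}\sum_{m=n+1}^{\infty}(m+1)a_m z^{m}},
\]
note that $w(0)=0$, and, exactly as in the proof of Theorem~\ref{4}, reduce $|w(z)|\le1$ on $\mathcal{U}$ to
\[
\sum_{m=1}^{n}(m+1)|a_m|+\frac{2\kappa-1}{2}\sum_{m=n+1}^{\infty}(m+1)|a_m|\le 1 .
\]
Because $\kappa>\tfrac32$ forces $\frac{2\kappa-1}{2}\ge1$, the left-hand side is bounded above by $\frac{2\kappa-1}{2}\sum_{m\ge1}(m+1)|a_m|\le1$ — the slack being $\frac{2\kappa-3}{2}\sum_{m=1}^{n}(m+1)|a_m|\ge0$ — which yields \eqref{m19}.

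For \eqref{m20} I would instead expand
\[
\frac{2\kappa+1}{2}\left\{\frac{(\mathbb{W}_{\lambda,\mu})_n'(z)}{\mathbb{W}_{\lambda,\mu}'(z)}-\frac{2\kappa-1}{2\kappa+1}\right\}=:\frac{1+w(z)}{1-w(z)},
\]
now with $\mathbb{W}_{\lambda,\mu}'(z)$ itself in the denominator; after forming $w=\frac{N-D}{N+D}$ the tail $\sum_{m=n+1}^{\infty}(m+1)a_m z^{m}$ in the denominator partially cancels the one in the numerator, and the same estimate shows $|w(z)|\le1$ is again equivalent to $\sum_{m=1}^{n}(m+1)|a_m|+\frac{2\kappa-1}{2}\sum_{m=n+1}^{\infty}(m+1)|a_m|\le1$, so \eqref{m20} follows from the identical coefficient bound.

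I do not anticipate a genuine obstacle: this is the standard partial-sum technique already deployed twice in the paper. The only points requiring a little care are (a) carrying the factor $m+1$ correctly through Lemma~\ref{2}(ii) when passing from the bound on $|\mathbb{W}_{\lambda,\mu}'|$ to one on $\sum_{m\ge1}(m+1)|a_m|$, and (b) observing that the hypothesis $\lambda+\mu>\tfrac32$ is exactly what makes $\frac{2\kappa-1}{2}\ge1$ and keeps the denominator in the bound for $|w(z)|$ positive (indeed $2\sum_{m=1}^{n}(m+1)|a_m|+\frac{2\kappa-1}{2}\sum_{m=n+1}^{\infty}(m+1)|a_m|<2\sum_{m\ge1}(m+1)|a_m|\le\frac{4}{2\kappa-1}<2$), so that the reduction to a coefficient inequality is legitimate. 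As in Remark~2.4, the sharpness of the constants and of the threshold $\tfrac32$ can be illustrated by specializing $\lambda,\mu$ (e.g.\ $\lambda=1$) and taking small $n$.
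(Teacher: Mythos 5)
Your proof is correct and is exactly the argument the paper intends: the paper's own ``proof'' of Theorem~\ref{5} is the single line that it is similar to Theorem~\ref{4}, and you have carried out precisely that transcription (Lemma~\ref{2}(ii) in place of Lemma~\ref{1}(ii), $\mu$ replaced by $\kappa=\lambda+\mu$, the factor $m+1$ absorbed correctly since $(m+1)a_m=\Gamma(\lambda+\mu)/\bigl(m!\,\Gamma(\lambda m+\lambda+\mu)\bigr)$). One small quibble with your final parenthetical: the step $2\sum_{m=1}^{n}(m+1)|a_m|+\tfrac{2\kappa-1}{2}\sum_{m=n+1}^{\infty}(m+1)|a_m|<2\sum_{m\ge1}(m+1)|a_m|$ requires $\tfrac{2\kappa-1}{2}<2$, i.e.\ $\kappa<\tfrac52$, which is not assumed; positivity of the denominator follows instead directly from $\sum_{m=1}^{n}(m+1)|a_m|+\tfrac{2\kappa-1}{2}\sum_{m=n+1}^{\infty}(m+1)|a_m|\le 1$, so the conclusion is unaffected.
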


\begin{proof}
Proof is similar to the Theorem \ref{4}.
\end{proof}

Recently Ravichandran \cite{Ravi} presented a survey article on geometric
properties of partial sums of univalent functions. Using Noshiro-Warschawski
Theorem \cite{good} for $n=0$ in the inequalities $\left( \ref{m8}\right) $
of Theorem \ref{4} and $\left( \ref{m19}\right) $ of Theorem \ref{5}, the
functions $\mathcal{W}_{\lambda ,\mu }(z)$ and $\mathbb{W}_{\lambda ,\mu
}(z) $ are univalent and also close to convex. Noshiro \cite{nos} showed
that the radius of starlikness of $f_{n}$ ( the partial sums of the function 
$f\in \mathcal{A}$) is $1/M$ if $f$ satisfies the inequality $\left\vert
f^{\prime }(z)\right\vert \leq M.$ This implies that by using the parts (ii)
of Lemma \ref{1} and Lemma \ref{3}, the radii of starlikeness of the
functions $\left( \mathcal{W}_{\lambda ,\mu }\right) _{n}(z)$ and $\left( 
\mathbb{W}_{\lambda ,\mu }\right) _{n}(z)$ are $\frac{\mu -1}{\mu +1}$ and $%
\frac{2\left( \lambda +\mu \right) -1}{2\left( \lambda +\mu \right) +1}$
respectively.

\textbf{Acknowledgement:} \textit{The research of N. Ya\u{g}mur is supported
by Erzincan University Rectorship under "The Scientific and Research Project
of Erzincan University", Project No: FEN-A-240215-0126.}

\end{document}